\newtheorem{theorem}{Theorem}[section]
\newtheorem{proposition}[theorem]{Proposition}
\newtheorem{lemma}[theorem]{Lemma}
\newtheorem{corollary}[theorem]{Corollary}
\newcommand{\Tr}{\operatorname{Tr}}
\newcommand{\Dt}{\operatorname{det}}
\theoremstyle{definition}
\newtheorem{remark}[theorem]{Remark}
\theoremstyle{example}
\theoremstyle{problem}
\theoremstyle{property}
\begin{document}

\title[Some inequalities for the matrix Heron mean]{Some inequalities for the matrix Heron mean}

\author[Dinh Trung Hoa]{Dinh Trung Hoa}
\address{Division of Computational Mathematics and Engineering (CME), Institute for Computational Science (INCOS), Ton Duc Thang University, Ho Chi Minh City, Vietnam; \\ Faculty of Civil Engineering, Ton Duc Thang University, Vietnam.
}
\email{dinhtrunghoa@tdt.edu.vn}

\keywords{operator $(r, s)$-convex functions, operator Jensen type inequality, operator Hansen-Pedersen type inequality, operator Popoviciu inequality} 
\subjclass[2000]{46L30, 15A45, 15B57}

\begin{abstract}
Let $A, B$ be positive definite matrices, $p=1, 2$ and $r\ge 0$. It is shown that   
\begin{equation*}
||A+ B + r(A\sharp_t B+A\sharp_{1-t} B)||_p \le ||A+ B + r(A^{t}B^{1-t} + A^{1-t}B^t)||_p.
\end{equation*}
We also prove that for positive definite matrices $A$ and $B$
\begin{equation*}\label{det}
\Dt (P_{t}(A, B)) \le \Dt (Q_{t}(A, B)),
\end{equation*}
where $Q_t(A, B)= \big(\frac{A^t+B^t}{2}\big)^{1/t}$ and $P_t(A, B)$ is the $t$-power mean of $A$ and $B$. As a consequence, we obtain the determinant inequality for the matrix Heron mean: for any positive definite matrices $A$ and $B,$
$$
\Dt(A+ B + 2(A\sharp B)) \le \Dt(A+ B + A^{1/2}B^{1/2} + A^{1/2}B^{1/2})).
$$
These results complement those obtained by Bhatia, Lim and Yamazaki (LAA, {\bf 501} (2016) 112-122).
\end{abstract}

\maketitle

\section{Introduction}
Let $M_n$ be the space of $n\times n$ complex matrices and $M_n^+$ the positive part of  $M_n$. Denote by $I$ the identity element of $M_n$. For self-adjoint matrices $A, B \in M_n$ the notation $A \le B$ means that $B - A \in  M_n^+$. For a real-valued function $f$ of a real variable and a self-adjoint matrix $A \in M_n$, the value $f(A)$ is understood by means of the functional calculus.

For $0\le t \le 1$ the $t$-geometric mean of $A$ and $B$ is defined as 
$$
A\sharp_t B = A^{1/2}(A^{-1/2}BA^{-1/2})^tA^{1/2}.
$$
The geometric mean $A\sharp B:= A\sharp_{1/2} B$ is the midpoint of the unique geodesic $A\sharp_t B$ connecting two points $A$ and $B$ in the Riemannian manifold of positive matrices.

Recently, Bhatia et al. \cite{bh} proved that for any positive definite matrices $A$ and $B$ and for $p=1, 2,$
\begin{equation}\label{bh1}
|| A+B + 2rA\sharp B||_p \le || A+B + r(A^{1/2}B^{1/2}+B^{1/2}A^{1/2})||_p.
\end{equation}
When $r=1$, inequality holds for $p= \infty$.

For the case $p=2$ the proof of (\ref{bh1}) is based on the following fact: for any positive definite $A$ and $B$,
\begin{equation}\label{bhatia1}
\lambda(A^{1/2}(A\sharp B)A^{1/2}) \prec_{\log} \lambda(A^{3/4}B^{1/2}A^{3/4}),
\end{equation}
where the notation $\lambda$ is used for the $n$-tuple of eigenvalues of a matrix $A$ in decent order and $\lambda(A) \prec_{\log} \lambda(B)$ means that 
$$
\prod_{j=1}^k \lambda_i(A) \le \prod_{j=1}^k \lambda_i(B), \quad 1 \le k\le n
$$ 
and inequality holds when $k=n$.

For $p=\infty$ inequality (\ref{bh1}) was proved by using a result of Lim and Yamazaki \cite[Theorem 4.1]{LY} 
\begin{equation}\label{ya}
||P_t(A_1, A_2,\cdots, A_n)||_\infty \le ||Q_t(A_1, A_2,\cdots, A_n)||_\infty,
\end{equation}
where {\em the power mean} $P_t(A_1, A_2,\cdots, A_n)$ of $A_1, A_2,\cdots, A_n$ \cite{LP} and is the unique solution of the matrix equation
$$
X = \frac{1}{m} \sum_{i=1}^m X\sharp_t A_i 
$$
and 
$$Q_t(A_1, A_2,\cdots, A_n) = \big(\frac{1}{m} \sum_{i=1}^m A_i^t\big)^{1/t}.$$
For $m=2$ Lim and P$\acute{a}$flia \cite[Remark 3.10]{LP} show that 
$$
P_t(A, B) = A \sharp_{1/t} \big(\frac{1}{2}(A+ A\sharp_t B)\big) = A^{1/2}\big(\frac{I+ (A^{-1/2}BA^{-1/2})^t}{2}\big)^{1/t}A^{1/2}.
$$
Hopefully, for $t=1/2$, 
$$
P_{1/2}(A, B) = \frac{1}{4}(A+B + A \sharp B), \quad Q_{1/2}(A, B) = \frac{1}{4}(A+B+A^{1/2}B^{1/2}+ B^{1/2}A^{1/2}).
$$
And so, the inequality (\ref{bh1}) for $p=\infty$ is obtained from (\ref{ya}) choosing $m=2$ and $t=1/2$.


Recall that the family of Heron mean \cite{heronb} for nonnegative number $a, b$ is defined as 
\begin{equation}\label{heron}
H_t(a, b) = (1-t)(\frac{a+b}{2}) + t\sqrt{ab},\quad 0 \le t \le 1.
\end{equation}
The Kubo-Ando extension of this to matrices is 
$$(1-t) \frac{A+B}{2} + t(A \sharp B)$$ 
that connects the arithmetic mean and the geometric mean, and a naive extension is 
$$
(1-t) \frac{A+B}{2} + t\frac{A^{1/2}B^{1/2}+B^{1/2}A^{1/2}}{2}
$$ that connects the arithmetic mean and the midpoint of the Heinz mean $\frac{A^{s}B^{1-s}+B^{s}A^{1-s}}{2}.$ So, inequality (\ref{bh1}) is a special case of the following
$$
||(1-t) \frac{A+B}{2} + t(A \sharp B)||_p \le  ||(1-t) \frac{A+B}{2} + t\frac{A^{1/2}B^{1/2}+B^{1/2}A^{1/2}}{2}||_p
$$
with $t= 1/2.$

Notice that another naive extension of the Heron mean for positive definite matrices $A$ and $B$ is defined as
$$
(1-t) \frac{A+B}{2} + t\frac{A^{t}B^{1-t}+A^{1-t}B^{t}}{2}.
$$

In this paper, we extend the inequality (\ref{bhatia1}) to $t$-geometric means. More precisely, we prove that for any positive definite matrices $A, B$ and for any $t\in [0, 1]$
\begin{equation*}\label{111}
\lambda (A^{1/2} (A\sharp_t B) A^{1/2}) \prec_{\log} \lambda (A^{1-t} B^t A^{1-t}).
\end{equation*}

Using this extension, we prove the following result:
\begin{theorem}\label{main1} Let $A, B$ be positive definite matrices and $p=1, 2$ and $r\ge 0$. Then 
\begin{equation}\label{th2}
||A+ B + r(A\sharp_t B+A\sharp_{1-t} B)||_p \le ||A+ B + r(A^{t}B^{1-t} + A^{1-t}B^t)||_p.
\end{equation}
\end{theorem}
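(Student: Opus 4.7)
The plan is to mimic the Bhatia--Lim--Yamazaki argument for (\ref{bh1}), replacing the log-majorization (\ref{bhatia1}) by its $t$-parameter generalization
$$\lambda(A^{1/2}(A\sharp_t B)A^{1/2}) \prec_{\log} \lambda(A^{1-t}B^tA^{1-t})$$
stated just above. Together with the three companion inequalities obtained by swapping $A \leftrightarrow B$, swapping $t \leftrightarrow 1-t$, and using the duality $A\sharp_s B = B\sharp_{1-s}A$, this log-majorization yields (by trace monotonicity) a family of four scalar trace estimates that will serve as the workhorse below.

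For $p=1$, I would first observe that $L := A+B+r(A\sharp_t B + A\sharp_{1-t}B)$ is positive definite, so $\|L\|_1 = \Tr L$. On the right-hand side, $R := A+B+r(A^t B^{1-t} + A^{1-t} B^t)$ is not self-adjoint, but each summand of $\Tr R$ has the form $\Tr(A^{a/2}B^b A^{a/2}) \ge 0$, so $\Tr R$ is real and non-negative, hence $\|R\|_1 \ge |\Tr R| = \Tr R$. It therefore suffices to prove $\Tr L \le \Tr R$, which reduces to the scalar inequality
$$\Tr(A\sharp_t B) + \Tr(A\sharp_{1-t}B) \le \Tr(A^t B^{1-t}) + \Tr(A^{1-t} B^t),$$
two applications of the classical Lieb--Ando trace bound $\Tr(A\sharp_s B) \le \Tr(A^{1-s} B^s)$.

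For $p=2$, the plan is to expand $\|L\|_2^2 = \Tr L^2$ and $\|R\|_2^2 = \Tr R^* R$ as polynomials in $r$. The zeroth-order terms both equal $\Tr((A+B)^2)$ and cancel. The first-order-in-$r$ comparison reduces, after checking reality of the relevant traces via cyclicity, to
$$\Tr[(A+B)(A\sharp_t B + A\sharp_{1-t}B)] \le \Tr[(A+B)(A^t B^{1-t} + A^{1-t} B^t)],$$
which expands into four scalar trace inequalities, each matched by one of the four log-majorizations above. The second-order-in-$r$ contribution decomposes into two diagonal estimates $\Tr((A\sharp_s B)^2) \le \Tr(A^{2(1-s)} B^{2s})$ (Ando--Hiai) together with the cross-term
$$\Tr[(A\sharp_t B)(A\sharp_{1-t}B)] \le \Tr(AB).$$
Using the identity $A\sharp_{1-t}B = A (A\sharp_t B)^{-1} B$ and setting $C := A^{-1/2}B A^{-1/2}$, the cross-term rewrites as $\Tr(A C^{1-t} A C^t) \le \Tr(A^2 C)$, which is the non-negativity of Wigner--Yanase--Dyson skew information, a classical consequence of Lieb's joint concavity theorem.

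The main obstacle is the log-majorization itself, established separately in the lemma preceding the theorem. Granted that, the $p=1$ case is immediate, and the $p=2$ case becomes a polynomial-in-$r$ bookkeeping argument whose only additional nontrivial ingredient is the Wigner--Yanase--Dyson estimate for the cross-term.
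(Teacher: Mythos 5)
Your proposal is correct, and its overall skeleton coincides with the paper's: for $p=1$ both arguments reduce to $\Tr(A\sharp_s B)\le \Tr(A^{1-s}B^s)$, and for $p=2$ both expand the squared norms in powers of $r$, cancel the $(A+B)^2$ terms, control the first-order terms by the log-majorization $\lambda(A^{1/2}(A\sharp_t B)A^{1/2})\prec_{\log}\lambda(A^{1-t/2}B^tA^{1-t/2})$ and its three symmetric companions (this is exactly the paper's inequality (\ref{22})), and control the squared terms by Araki--Lieb--Thirring. The one place where you genuinely diverge is the cross term $\Tr\bigl((A\sharp_t B)(A\sharp_{1-t}B)\bigr)\le \Tr(AB)$. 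The paper derives this from its Theorem \ref{mainlemma}, i.e.\ from the bound $|\Tr(X^{1/2}Y^zX^{1/2}Y^{1-z})|\le\Tr(XY)$ obtained by Cauchy--Schwarz and H\"older estimates on the boundary lines together with the Hadamard three-lines theorem; note that, as written, that argument only covers $\mathrm{Re}(z)\in[1/4,3/4]$. Your route---writing $(A\sharp_tB)(A\sharp_{1-t}B)=A^{1/2}C^tAC^{1-t}A^{1/2}$ with $C=A^{-1/2}BA^{-1/2}$ and invoking the Wigner--Yanase--Dyson inequality $\Tr(C^tAC^{1-t}A)\le\Tr(CA^2)=\Tr(AB)$---is both more elementary (for the nonnegativity of the skew information with self-adjoint $K=A$ one only needs Young's inequality $\lambda_i^t\lambda_j^{1-t}+\lambda_i^{1-t}\lambda_j^t\le\lambda_i+\lambda_j$ applied entrywise in the eigenbasis of $C$; Lieb's concavity theorem is not actually required) and covers the full range $t\in[0,1]$ directly, which the paper's interpolation argument does not. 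So your version buys a cleaner and slightly stronger treatment of the only nontrivial cross term, at no cost elsewhere.
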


Also, using the approach in \cite{BEL} we show that 
for positive definite matrices $A, B$ and for any $z$ in the strips $S_{1/4}=\{z \in \mathbb{C}: \hbox{Re}(z) \in [1/4, 3/4]\}$, 
\begin{equation*}\label{main}
|\Tr(A^{1/2}B^z A^{1/2}B^{1-z})| \le \Tr(AB).
\end{equation*} 

\section{Inequalities}

\begin{proposition}\label{mainprop}
Let $A, B$ be positive definite matrices. Then for any $t\in [0, 1]$
\begin{equation}\label{111}
\lambda (A^{1/2} (A\sharp_t B) A^{1/2}) \prec_{\log} \lambda (A^{1-t/2} B^t A^{1-t/2}).
\end{equation}
\end{proposition}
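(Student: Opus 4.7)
The plan is to reduce the log-majorization to an operator-norm inequality via the antisymmetric tensor trick, and then establish the norm inequality using the Furuta inequality. Expanding the definition of $\sharp_t$ rewrites the left-hand side as
\[
A^{1/2}(A\sharp_t B)A^{1/2} = A(A^{-1/2}BA^{-1/2})^t A,
\]
and since $\wedge^k$ commutes with products, positive powers, and congruences, and since $\|\wedge^k M\|=\prod_{j=1}^k\lambda_j(M)$ for positive $M$, the full log-majorization follows once one proves the single operator-norm inequality
\[
\|A(A^{-1/2}BA^{-1/2})^t A\| \le \|A^{1-t/2}B^t A^{1-t/2}\|
\]
for arbitrary positive definite $A,B$. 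The equality required at $k=n$ is automatic, since both sides have determinant $\det(A)^{2-t}\det(B)^t$.

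For the norm bound, both sides scale by $\lambda^t$ when $B$ is replaced by $\lambda B$, so I may normalize to $A^{1-t/2}B^t A^{1-t/2}\le I$, equivalently $B^t\le A^{t-2}$. The goal then becomes the operator inequality $(A^{-1/2}BA^{-1/2})^t\le A^{-2}$.

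I would deduce this from the Furuta inequality applied to the pair $P=A^{t-2}\ge Q=B^t$, with parameters $p=1/t$, $r=1/(2-t)$, and $q=1/t$ (the case $t=0$ being trivial). These choices are forced by requiring $Q^p=B$ and $P^{r/2}=A^{-1/2}$; they then produce $P^{(p+r)/q}=A^{-2}$, and the Furuta admissibility condition $(1+r)q\ge p+r$ simplifies to exactly $t\le 1$. The resulting inequality
\[
(A^{-1/2}BA^{-1/2})^t = (P^{r/2}Q^p P^{r/2})^{1/q} \le P^{(p+r)/q} = A^{-2}
\]
closes the argument. The hard part will be spotting the right Furuta parameters; the fact that the admissibility condition then collapses precisely to the stated range $t\in[0,1]$ is a reassuring confirmation that this is the correct route.
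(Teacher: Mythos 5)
Your proposal is correct and follows essentially the same route as the paper: reduce the log-majorization to the top-eigenvalue (operator-norm) inequality via antisymmetric tensor powers (the paper uses compound matrices $C_k$), normalize to reduce that to the implication $B^t \le A^{t-2} \Rightarrow (A^{-1/2}BA^{-1/2})^t \le A^{-2}$, and close with the Furuta inequality, where your parameters coincide with the paper's up to the convention $X^rY^pX^r$ versus $P^{r/2}QP^{r/2}$. If anything, your write-up is the more careful one, since you state and verify the Furuta admissibility condition explicitly and check the determinant equality at $k=n$.
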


\begin{proof}
Firstly, let's prove 
\begin{equation}\label{lem1}
\lambda_1 (A^{1/2} (A\sharp_t B) A^{1/2}) \le \lambda_1 (A^{1-t/2} B^t A^{1-t/2}).
\end{equation}
This inequality is equivalent to the statement 
$$
A^{1-t/2} B^t A^{1-t/2} \le I \quad \Longrightarrow \quad A^{1/2} (A\sharp_t B) A^{1/2} \le I,
$$
which in turn is equivalent to 
\begin{equation}\label{222}
B^t \le A^{t-2} \quad \Longrightarrow \quad (A^{-1/2}BA^{-1/2})^t \le A^{-2}.
\end{equation}
That can be proved by using the Furuta inequality which states that if $0 \le Y \le X$, then for all $p\ge 1$ and $r\ge 0$ we have
\begin{equation}\label{lem2}
(X^{r}Y^pX^r)^{1/2} \le (X^{p+2r})^{1/p}.
\end{equation}
Let apply (\ref{lem2}) to $X=A^{t-2}$, $Y=B^t$, $p=\frac{1}{t}$ and $r=-\frac{1}{2(t-2)}$, we get (\ref{222}), and hence (\ref{lem1}). 

Denote by $C_k(X)$ the $k$-th compound of $X\in M_n$, $k=1, \dots, n$. Note that for any positive definite matrices $X, Y$, 
 \begin{eqnarray}
 C_k(A^{1/2}(A\sharp_t B)A^{1/2}) &=& C_k(A(A^{-1/2}BA^{-1/2})^tA)  \nonumber \\
 &=& C_k(A)C_k((A^{-1/2}BA^{-1/2})^{t})C_k(A)  \nonumber \\
& =& C_k^{1/2}(A)(C_k^{1/2}(A)(C_k^{-1/2}(A)C_k(B)C_k^{-1/2}(A))^{t}C_k^{1/2}(A))C_k^{1/2}(A) \nonumber \\
& =& C_k^{1/2}(A)(C_k(A)\sharp_t C_k(B))C_k^{1/2}(A). \label{compound}
 \end{eqnarray}
In the other hand, 
 \begin{equation}\label{lk}
\lambda_1(C_k(A^{1/2}(A\sharp_t B)A^{1/2}))= \prod_{i=1}^k \lambda_i (A^{1/2}(A\sharp_t B)A^{1/2}), \quad k=1, \dots, n-1,.
\end{equation} 
 On account of (\ref{lem1}) and (\ref{lk}) for $1\le k \le n$ we have
 \begin{align*}
\prod_{i=1}^k \lambda_i (A^{1/2}(A\sharp_t B)A^{1/2}) & = \lambda_1(C_k(A^{1/2}(A\sharp_t B)A^{1/2})) \\
 & \le \lambda_1 (C_k(A)^{1-t} C_k(B)^t C_k(A)^{1-t}) \\
 &= \prod_{i=1}^k \lambda_i(A^{1-t} B^t A^{1-t}).
 \end{align*}
The equality holds for $k=n$, since  $ \det (A^{1/2}(A\sharp_t B)A^{1/2)} =\det(A^{1-t} B^t A^{1-t}).
$

Thus, we have proved (\ref{111}). 
\end{proof}

The following special case of Proposition  will be used in the proof of the main result.
\begin{corollary}\label{cormain}
For any positive definite matrices $A$ and $B,$
$$
\Tr(A(A\sharp_t B)) \le \Tr(A^{2-t} B^t),\quad t\in [0,1].
$$
\end{corollary}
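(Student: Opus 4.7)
The plan is to derive the corollary directly from Proposition~\ref{mainprop}, so essentially no new work is required beyond invoking a standard implication between log-majorization and trace inequalities, together with the cyclicity of the trace.

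First, I would apply Proposition~\ref{mainprop} to obtain
$$
\lambda(A^{1/2}(A\sharp_t B)A^{1/2}) \prec_{\log} \lambda(A^{1-t/2} B^t A^{1-t/2}).
$$
Since both matrices appearing here are positive definite, all eigenvalues are strictly positive. I would then invoke the classical fact (see, e.g., Bhatia's \emph{Matrix Analysis}) that log-majorization between tuples of positive numbers implies weak majorization, i.e.\ $\lambda(X)\prec_{\log}\lambda(Y)$ implies $\lambda(X)\prec_w \lambda(Y)$. This follows from the Schur-convexity argument using that the exponential is convex and increasing.

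Taking $k=n$ in the resulting weak majorization yields
$$
\Tr(A^{1/2}(A\sharp_t B)A^{1/2}) \le \Tr(A^{1-t/2} B^t A^{1-t/2}).
$$
Finally, I would conclude by applying the cyclic property of the trace to both sides: $\Tr(A^{1/2}(A\sharp_t B)A^{1/2}) = \Tr(A(A\sharp_t B))$ on the left, and $\Tr(A^{1-t/2} B^t A^{1-t/2}) = \Tr(A^{2-t} B^t)$ on the right, which is the desired inequality.

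There is no real obstacle here, since Proposition~\ref{mainprop} is the substantive statement; the corollary is merely the $k=n$ case, phrased via traces rather than products of eigenvalues. The only thing worth being careful about is the passage from log-majorization to weak majorization, which requires positivity of the eigenvalues (automatic in our setting because both $A^{1/2}(A\sharp_t B)A^{1/2}$ and $A^{1-t/2} B^t A^{1-t/2}$ are congruent to positive definite matrices and hence positive definite themselves).
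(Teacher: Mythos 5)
Your proposal is correct and is exactly the intended derivation: the paper offers no separate proof of the corollary, presenting it as an immediate special case of Proposition~\ref{mainprop}, namely the standard implication $\prec_{\log}\Rightarrow\prec_w$ followed by the $k=n$ partial sum (the trace) and cyclicity, just as you describe. No further comment is needed.
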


In order to prove the next result, let's recall the generalized H\o lder inequality for trace \cite[Theorem 2.8]{simon}: let $\frac{1}{p}+\frac{1}{q}+\frac{1}{r}=1$ for $p, q, r \ge 1$ and $X, Y, Z$ be matrices in $M_n$, then 
$$
\Tr(XYZ) \le ||XYZ||_1 \le ||X||_p||Y||_q||Z||_r.
$$
We also need the famous Lieb-Thirring inequality: $\Tr((AB)^m) \le \Tr(A^mB^m)$.
\begin{theorem}\label{mainlemma}
Let $X, Y$ be positive definite matrices and $z \in S_{1/4}=\{z \in \mathbb{C}: \hbox{Re}(z) \in [\frac{1}{4}, \frac{3}{4}]\}$. Then
\begin{equation}\label{main}
|\Tr(X^{1/2}Y^z Y^{1/2}Y^{1-z})| \le \Tr(XY).
\end{equation}  
\end{theorem}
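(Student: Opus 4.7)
The plan is to define $f(z) := \Tr(X^{1/2} Y^{z} X^{1/2} Y^{1-z})$ and to apply Hadamard's three-lines theorem on the strip $S_{1/4}$. Since $Y$ is positive definite, $Y^z = \exp(z\log Y)$ is entire, so $f$ is entire; and since $Y^{iy}$ is unitary, $\|Y^z\|_\infty$ depends only on $\operatorname{Re}(z)$, which yields a uniform bound for $|f|$ on the closed strip. The three-lines theorem then reduces everything to the two boundary estimates
\[
\sup_{y\in\mathbb R}|f(1/4+iy)|\le \Tr(XY)\quad\text{and}\quad\sup_{y\in\mathbb R}|f(3/4+iy)|\le \Tr(XY).
\]

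For the boundary $\operatorname{Re}(z)=1/4$, the idea is to split the factors and apply Cauchy--Schwarz so that the imaginary powers cancel via $Y^{iy}(Y^{iy})^*=I$, while the surviving real powers of $Y$ line up for the Lieb--Thirring inequality. Specifically, with $U := Y^{iy}$ unitary and commuting with every real power of $Y$, I would write $Y^{3/4-iy}=Y^{1/2}Y^{1/4}U^*$ and cycle under the trace to obtain $f(1/4+iy)=\Tr(AB)$ with
\[
A := Y^{1/4}U^*X^{1/2}Y^{1/4},\qquad B := UX^{1/2}Y^{1/2}.
\]
A direct computation using $U^*U=I$ and $UY^{1/2}U^*=Y^{1/2}$ then gives $\|B\|_2^2=\Tr(XY)$ and $\|A\|_2^2=\Tr((X^{1/2}Y^{1/2})^2)$. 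The Lieb--Thirring inequality $\Tr((X^{1/2}Y^{1/2})^2)\le\Tr(XY)$, combined with $|\Tr(AB)|\le\|A\|_2\|B\|_2$, finishes this boundary: $|f(1/4+iy)|\le\Tr(XY)$.

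The matching bound on $\operatorname{Re}(z)=3/4$ is then immediate from the cyclic symmetry $f(1-z)=f(z)$, which swaps the two boundary lines, after which the three-lines theorem delivers the claim throughout $S_{1/4}$. The main obstacle is guessing the correct decomposition $f(1/4+iy)=\Tr(AB)$: it must simultaneously (i) allow $UU^*=I$ to absorb the factors $Y^{\pm iy}$ after Hilbert--Schmidt norms are computed, and (ii) recombine the remaining real powers of $Y$ so that one of the two squared $2$-norms becomes the Lieb--Thirring side $\Tr((X^{1/2}Y^{1/2})^2)$. It is requirement (ii) that singles out the real part $1/4$: this is exactly the value that leaves, after splitting $Y^{3/4}=Y^{1/2}Y^{1/4}$, a spare $Y^{1/4}$ that can pair with the $Y^{1/4}$ from $Y^{1/4+iy}$ to form the symmetric pattern $(X^{1/2}Y^{1/2})^2$ on which Lieb--Thirring acts.
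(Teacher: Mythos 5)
Your proof is correct (and, like the paper's own argument, it tacitly fixes the typo in the statement: the middle factor must be $X^{1/2}$, not $Y^{1/2}$, for the claim to be nontrivial). The skeleton is the same as the paper's --- Hadamard's three-lines theorem plus a Schatten-norm estimate on a boundary line, with a Lieb--Thirring-type inequality supplying the final comparison to $\Tr(XY)$ --- but the execution differs in two genuine ways. On the line $\operatorname{Re} z=1/4$ the paper splits the trace into three factors and applies the generalized H\"older inequality with exponents $4,4,2$, obtaining $\|Y^{1/4}X^{1/4}\|_4^2\,\|X^{1/2}Y^{1/2}\|_2$, and then needs Araki--Lieb--Thirring once more to convert $\|Y^{1/4}X^{1/4}\|_4^2$ into $\|Y^{1/2}X^{1/2}\|_2$; your decomposition $f(1/4+iy)=\Tr(AB)$ with $A=Y^{1/4}U^*X^{1/2}Y^{1/4}$ and $B=UX^{1/2}Y^{1/2}$ reaches the same bound from a single Cauchy--Schwarz step together with $\Tr\bigl((X^{1/2}Y^{1/2})^2\bigr)\le\Tr(XY)$, which is more economical and sidesteps the paper's notational slip $\|\cdot\|_{1/4}$. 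Second, the paper separately proves the bound on the central line $\operatorname{Re} z=1/2$, runs the three-lines theorem only on the half-strip $[1/4,1/2]$, and then invokes ``the symmetry $z\mapsto 1-z$ and exchanging the roles'' of the matrices; you observe instead that cyclicity of the trace gives $f(1-z)=f(z)$ outright, so the single estimate at $\operatorname{Re} z=1/4$ covers both boundary lines of the full strip $[1/4,3/4]$ and one application of the three-lines theorem finishes. Your route needs one fewer boundary computation and a cleaner symmetry argument; the paper's route has the minor advantage of exhibiting explicitly the purely Cauchy--Schwarz bound on the central line $\operatorname{Re} z = 1/2$.
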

\begin{proof}
Let $z = \frac{1}{2} + iy, y\in \mathbb{R}$ denote any point in the vertical line of the complex plane passing $x= 1/2$. Then we have
\begin{align*}
|\Tr(X^{1/2}Y^z X^{1/2}Y^{1-z})|  & = |\Tr(X^{1/2}Y^{1/2}Y^{iy} X^{1/2}Y^{1/2}Y^{-iy})| \\
& \le \Tr(|X^{1/2}Y^{1/2}Y^{iy} X^{1/2}Y^{1/2}Y^{-iy}|) \\
& \le  || X^{1/2}Y^{1/2}Y^{iy}||_2 ||X^{1/2}Y^{1/2}Y^{-iy}||_2 \\
& = ||X^{1/2}Y^{1/2}||^2_2 \\
& = \Tr(XY).
\end{align*}
The first inequality is obvious, the second one follows from the Cauchy-Schwarz inequality for trace, and the second equality is from the fact that $Y^{iy}$ and $Y^{-iy}$ are unitary operators.

Now let consider $z= \frac{1}{4} + iy, y\in \mathbb{R}$, a generic point in the vertical line over $x=1/4$, then by using the H$\ddot{o}$lder inequality with $\frac{1}{4}+\frac{1}{4}+\frac{1}{2}=1$ and the Araki-Lieb-Thirring inequality we have
\begin{align*}
|\Tr(X^{1/2}Y^z X^{1/2}Y^{1-z})|  & = |\Tr(X^{1/2}Y^{1/4}Y^{iy} X^{1/2}Y^{1/2}Y^{-iy}Y^{1/4})| \\
& =|\Tr(Y^{1/4}X^{1/4}X^{1/4}Y^{1/4}Y^{iy} X^{1/2}Y^{1/2}Y^{-iy})| \\
& \le  || Y^{1/4}X^{1/4}||_{1/4}^2 ||X^{1/2}Y^{1/2}||_2 \\
& \le || Y^{1/2}X^{1/2}||_2 ||X^{1/2}Y^{1/2}||_2 \\
& = \Tr(XY).
\end{align*}
Mention that the map $x \mapsto A^z = e^{x\ln A} = \sum_k z^k \frac{(\ln A)^k}{k!}$ is analytic for $A > 0$, the product of matrices is also analytic and the trace is complex linear, the function $f(z)=\Tr(X^{1/2}Y^zX^{1/2} Y^{1-z})
$
is entire. Moreover, by the similar above argument for $z=x+iy$ it is easy to see that if $0 \le x \le 1$ then the function is bounded. By the Hadamard three-lines theorem the supremum $M(x) = \sup \{|f(x+iy)|: y \in \mathbb{R}\}$ of the function $\Tr(X^{1/2}Y^zX^{1/2} Y^{1-z})$ is log-convex, that means, for any $\lambda \in [0, 1],$
$$
M(\lambda x_1 + (1-\lambda)x_2) \le M(x_1)^\lambda M(x_2)^{1-\lambda} \le \Tr(XY)^\lambda \Tr(XY)^{1-\lambda} = \Tr(XY).
$$
Therefore, the bound $\Tr(XY)$ is valid in the vertical strip $1/4 \le \hbox{Re}(z) \le 1/2.$ Invoking the symmetry $z\mapsto 1-z$ and exchanging the roles of $A$ and $B$ give the desired bound on the full strip $S_{1/4} = \{1/4 \le \hbox{Re}(z) \le 3/4\}$.
\end{proof}
As a consequence, we have the following inequality (see \cite[Inequality (39)]{bh}): 
\begin{corollary}\label{cor} For any positive definite matrices and for $t\in [0,1]$.
$$
\Tr((A\sharp_t B)(A\sharp_{1-t} B)) \le \Tr(AB).
$$
\end{corollary}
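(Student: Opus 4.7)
The plan is to recognize $\Tr\bigl((A\sharp_t B)(A\sharp_{1-t} B)\bigr)$ as an instance of the trace expression in Theorem~\ref{mainlemma} and invoke that theorem directly. First, I would unfold the definition of the $t$-geometric means and combine the two middle factors $A^{1/2}\cdot A^{1/2}=A$:
\begin{align*}
(A\sharp_t B)(A\sharp_{1-t} B)
&= A^{1/2}(A^{-1/2}BA^{-1/2})^{t}A^{1/2}\cdot A^{1/2}(A^{-1/2}BA^{-1/2})^{1-t}A^{1/2}\\
&= A^{1/2}(A^{-1/2}BA^{-1/2})^{t}\,A\,(A^{-1/2}BA^{-1/2})^{1-t}A^{1/2}.
\end{align*}
Cycling the outer $A^{1/2}$ factors inside the trace would then give
\[
\Tr\bigl((A\sharp_t B)(A\sharp_{1-t} B)\bigr) \;=\; \Tr\bigl(A\,(A^{-1/2}BA^{-1/2})^{t}\,A\,(A^{-1/2}BA^{-1/2})^{1-t}\bigr).
\]

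Next, I would set $X:=A^{2}$ and $Y:=A^{-1/2}BA^{-1/2}$, both of which are positive definite, and note that $X^{1/2}=A$. With these choices the displayed trace is precisely $\Tr(X^{1/2}Y^{t}X^{1/2}Y^{1-t})$, so Theorem~\ref{mainlemma} would supply
\[
\bigl|\Tr\bigl(X^{1/2}Y^{t}X^{1/2}Y^{1-t}\bigr)\bigr|\;\le\;\Tr(XY)\;=\;\Tr\bigl(A^{2}\cdot A^{-1/2}BA^{-1/2}\bigr)\;=\;\Tr(AB),
\]
the last equality being one more application of cyclicity. Since the left-hand side is a positive real number for real $t$, the absolute value is cosmetic, and the boundary values $t=0,\,t=1$ contribute equality trivially via $A\sharp_{0}B=A$ and $A\sharp_{1}B=B$; the whole expression is also invariant under $t\leftrightarrow 1-t$.

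The step I expect to be the main obstacle is the range of admissible $t$. Theorem~\ref{mainlemma} is established only on the vertical strip $\Re(z)\in[1/4,3/4]$, so the substitution above delivers the corollary outright only for $t\in[1/4,3/4]$. To extend to the full interval $[0,1]$ stated in the corollary, I would try to prove, for this specific pair $(X,Y)=(A^{2},A^{-1/2}BA^{-1/2})$, the bound $|\Tr(X^{1/2}Y^{iy}X^{1/2}Y^{1-iy})|\le\Tr(XY)$ on the imaginary axis; combined with the bound already known on $\Re(z)=1/4$, the Hadamard three-lines theorem would then cover the strip $\Re(z)\in[0,1/4]$, and by the symmetry $z\mapsto 1-z$ also $[3/4,1]$, giving the full range.
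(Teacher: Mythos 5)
Your reduction is exactly the one the paper intends (the paper offers no explicit proof of the corollary, only the phrase ``as a consequence'' of Theorem~\ref{mainlemma}), and the algebra is correct: with $C=A^{-1/2}BA^{-1/2}$ one has $\Tr((A\sharp_t B)(A\sharp_{1-t}B))=\Tr(AC^tAC^{1-t})$, which is $\Tr(X^{1/2}Y^tX^{1/2}Y^{1-t})$ for $X=A^2$, $Y=C$, and $\Tr(XY)=\Tr(AB)$. You have also correctly identified the real problem, which the paper glosses over: Theorem~\ref{mainlemma} is only proved on the strip $\mathrm{Re}(z)\in[1/4,3/4]$, so this substitution yields the corollary only for $t\in[1/4,3/4]$, while the statement claims all of $[0,1]$. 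Your proposed repair, however, is left as a wish (``I would try to prove\dots''), so as written the argument is incomplete for $t\in[0,1/4)\cup(3/4,1]$; note also that the Cauchy--Schwarz trick used in the paper on the line $\mathrm{Re}(z)=1/2$ does not transfer to $\mathrm{Re}(z)=0$, since $\Tr(X^{1/2}Y^{iy}\cdot X^{1/2}YY^{-iy})$ no longer splits into two factors of equal Hilbert--Schmidt norm.

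The gap is genuinely fillable, and more cheaply than by complex analysis. Work in an eigenbasis of $Y$ with eigenvalues $\mu_1,\dots,\mu_n$ and set $S=X^{1/2}$; then
\begin{equation*}
\Tr(SY^{t}SY^{1-t})=\sum_{i,j}|S_{ij}|^2\,\mu_j^{t}\mu_i^{1-t}\le\sum_{i,j}|S_{ij}|^2\bigl(t\mu_j+(1-t)\mu_i\bigr)=\Tr(XY),
\end{equation*}
by the weighted arithmetic--geometric mean inequality applied termwise (equivalently, $t\mapsto\Tr(SY^tSY^{1-t})$ is convex on $[0,1]$ and takes the value $\Tr(XY)$ at both endpoints; this is also the Heinz inequality for the Hilbert--Schmidt norm, and is the route behind inequality (39) of Bhatia--Lim--Yamazaki, which the paper cites). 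This gives the corollary directly for every real $t\in[0,1]$ without invoking Theorem~\ref{mainlemma} at all. Incidentally, the same expansion shows $|\Tr(SY^zSY^{1-z})|\le\sum_{i,j}|S_{ij}|^2\mu_j^{x}\mu_i^{1-x}\le\Tr(XY)$ for every $z=x+iy$ with $x\in[0,1]$, so the bound you wanted on the line $\mathrm{Re}(z)=0$ does hold --- but once you have this, the three-lines theorem is superfluous. I would therefore either restrict your first paragraph's conclusion to $t\in[1/4,3/4]$ and supply the elementary convexity argument for the full range, or replace the appeal to Theorem~\ref{mainlemma} by the convexity argument altogether.
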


Now we are ready to prove the main result in this paper.
\begin{theorem}\label{theorem} Let $A, B$ be positive definite matrices, $p=1, 2$ and $r\ge 0$. Then 
\begin{equation}\label{th2}
||A+ B + r(A\sharp_t B+A\sharp_{1-t} B)||_p \le ||A+ B + r(A^{t}B^{1-t} + A^{1-t}B^t)||_p.
\end{equation}
\end{theorem}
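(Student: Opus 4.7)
The plan is to reduce the norm inequality, for each of $p=1$ and $p=2$, to scalar trace comparisons handled by Corollary~\ref{cormain}, Corollary~\ref{cor}, the Lieb--Thirring inequality, and (for $p=1$) a companion log-majorization proved by the same Furuta-plus-compound-matrix scheme that established Proposition~\ref{mainprop}.

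For $p=2$, set $X=A+B+r(A\sharp_t B+A\sharp_{1-t}B)$, which is Hermitian and positive, and $Y=A+B+r(A^tB^{1-t}+A^{1-t}B^t)$, which is not Hermitian. Then $\|X\|_2^2=\Tr(X^2)$ while $\|Y\|_2^2=\Tr(Y^*Y)$. Cyclicity of the trace shows the two cross terms in $\Tr(Y^*Y)$ coincide, so both norms become quadratic polynomials in $r$ sharing the same constant term $\Tr((A+B)^2)$; one estimates the $r$- and $r^2$-coefficients separately. For the $r$-coefficient, expand $\Tr((A+B)(A\sharp_t B+A\sharp_{1-t}B))$ into four summands of the shape $\Tr(C(A\sharp_u B))$ with $C\in\{A,B\}$, $u\in\{t,1-t\}$; invoking $A\sharp_u B=B\sharp_{1-u}A$ and applying Corollary~\ref{cormain} to each yields a bound equal (term by term, after cyclicity) to the expansion of $\Tr((A+B)(A^tB^{1-t}+A^{1-t}B^t))$. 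For the $r^2$-coefficient one needs
\begin{equation*}
\Tr((A\sharp_t B)^2)+\Tr((A\sharp_{1-t}B)^2)+2\Tr((A\sharp_t B)(A\sharp_{1-t}B))\le\Tr(A^{2-2t}B^{2t})+\Tr(A^{2t}B^{2-2t})+2\Tr(AB);
\end{equation*}
the mixed term is Corollary~\ref{cor}. For the square terms, with $K=A^{-1/2}BA^{-1/2}$ one has $(A\sharp_t B)^2=A^{1/2}K^tAK^tA^{1/2}$, so $\Tr((A\sharp_t B)^2)=\Tr((AK^t)^2)$; Lieb--Thirring gives $\Tr((AK^t)^2)\le\Tr(A^2K^{2t})=\Tr(A(A\sharp_{2t}B))$, and Corollary~\ref{cormain} applied with parameter $2t$ gives $\Tr(A(A\sharp_{2t}B))\le\Tr(A^{2-2t}B^{2t})$, valid for $2t\le 1$. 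The case $t\ge 1/2$ follows from the involution $A\sharp_t B=B\sharp_{1-t}A$ with $A,B$ interchanged.

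For $p=1$, $\|X\|_1=\Tr(X)$ since $X\ge 0$, and $\|Y\|_1\ge|\Tr(Y)|=\Tr(Y)$ because each summand $\Tr(A^uB^v)=\Tr(B^{v/2}A^uB^{v/2})$ is strictly positive. It therefore suffices to prove $\Tr(A\sharp_t B)+\Tr(A\sharp_{1-t}B)\le\Tr(A^tB^{1-t})+\Tr(A^{1-t}B^t)$, which by the $t\leftrightarrow 1-t$ symmetry reduces to $\Tr(A\sharp_t B)\le\Tr(A^{1-t}B^t)$. I would establish this via the companion log-majorization $\lambda(A\sharp_t B)\prec_{\log}\lambda(A^{(1-t)/2}B^tA^{(1-t)/2})$: the top-eigenvalue implication $A^{(1-t)/2}B^tA^{(1-t)/2}\le I\Rightarrow A\sharp_t B\le I$ is equivalent to $B^t\le A^{t-1}\Rightarrow (A^{-1/2}BA^{-1/2})^t\le A^{-1}$, which follows from Furuta's inequality with $X=A^{t-1}$, $Y=B^t$, $p=1/t$, $r=1/(2(1-t))$; the compound-matrix trick from the proof of Proposition~\ref{mainprop} then promotes this to log-majorization at every level $k$, and taking traces (noting $\lambda(A^{(1-t)/2}B^tA^{(1-t)/2})=\lambda(A^{1-t}B^t)$) delivers the desired scalar inequality.

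The main obstacle is the $r^2$-coefficient estimate in the $p=2$ case: Corollary~\ref{cormain} bounds $\Tr(A(A\sharp_t B))$, not $\Tr((A\sharp_t B)^2)$, so one must first invoke Lieb--Thirring to rewrite the square as $\Tr(A(A\sharp_{2t}B))$ (effectively doubling the geometric-mean parameter) before Corollary~\ref{cormain} becomes applicable; the apparent restriction $2t\le 1$ is resolved by swapping $A\leftrightarrow B$ and $t\leftrightarrow 1-t$.
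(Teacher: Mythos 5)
Your proposal is correct and follows essentially the same route as the paper: reduce $p=1$ to $\Tr(A\sharp_t B)\le \Tr(A^{1-t}B^t)$ together with $\|Y\|_1\ge \Tr(Y)$, and for $p=2$ expand the squared Hilbert--Schmidt norms and bound the cross term by Corollary~\ref{cormain} (i.e.\ Proposition~\ref{mainprop}), the mixed term $\Tr((A\sharp_t B)(A\sharp_{1-t}B))$ by Corollary~\ref{cor}, and the squared geometric-mean terms by a Lieb--Thirring-type bound. The only difference is cosmetic: where the paper cites $\Tr(A\sharp_t B)\le\Tr(A^{1-t}B^t)$ as well known and quotes $\Tr((A\sharp_t B)^2)\le\Tr(B^{2t}A^{2(1-t)})$ from Bhatia--Lim--Yamazaki, you supply self-contained derivations (a companion Furuta-plus-compounds log-majorization, and Lieb--Thirring combined with Corollary~\ref{cormain} at parameter $2t$ with the $t\leftrightarrow 1-t$ swap), which makes the argument more self-contained but does not change its structure.
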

\begin{proof}
Since $A+ B + r(A\sharp_t B+A\sharp_{1-t} B)\ge 0$, the left hand side of (\ref{th2}) is $\Tr(A+ B + r(A\sharp_t B+A\sharp_{1-t} B))$. It is well-known that $\Tr(A\sharp_t B) \le \Tr(A^{1-t}B^t)$ and $\Tr(A\sharp_{1-t} B) \le \Tr(A^{t}B^{1-t})$. We have  
\begin{align*}
\Tr(A+ B + r(A\sharp_t B+A\sharp_{1-t} B)) & \le \Tr(A+ B + r(A^{t}B^{1-t} + A^{1-t}B^t)) \\
& \le \Tr(|A+ B + r(A^{t}B^{1-t} + A^{1-t}B^t)|).
\end{align*}
So for $p=1$ the inequality (\ref{th2}) follows. 

Next consider the case $p=2$. Notice again that $\Tr((A\sharp_t B)^2) \le \Tr(B^{2t}A^{2(1-t)})$ (see \cite[pape 121]{bh}). Similarly, we also have $\Tr( (A\sharp_{1-t} B)^2) \le \Tr(A^{2t}B^{2(1-t)})$. Then 
\begin{equation}\label{21}
\Tr( (A\sharp_t B)^2+(A\sharp_{1-t} B)^2) \le \Tr(A^{2t}B^{2(1-t)} + B^{2t}A^{2(1-t)}).
\end{equation}
By Proposition \ref{mainprop} we have 
\begin{equation}\label{22}
\Tr( (A+B)(A\sharp_t B+A\sharp_{1-t} B)) \le \Tr(A^{t+1} B^{1-t}+A^{2-t}B^t + A^tB^{2-t} + A^{1-t}B^{1+t}).
\end{equation}
Now, squaring both sides of (\ref{th2}), we need to show 
\begin{align*}
&  \Tr((A+ B)^2 + r^2(A\sharp_t B)^2+r^2(A\sharp_{1-t} B)^2+ 2r(A+B)(A\sharp_t B+A\sharp_{1-t} B) + 2r^2(A\sharp_t B)(A\sharp_{1-t} B))\\
&\le  \Tr((A+ B)^2 + 2r(A^{t+1} B^{1-t}+A^{2-t}B^t + A^tB^{2-t} + A^{1-t}B^{1+t}) + r^2A^{2t}B^{2(1-t)} )+r^2B^{2t}A^{2(1-t)} \\
&\quad + 2r^2\Tr(AB). 
\end{align*}
The last inequality follows from (\ref{21}), (\ref{22}) and Corollary \ref{cor}.
\end{proof}
\begin{remark}
From Theorem \ref{theorem} for $s \in [0, 1]$ we 
$$
||(1-s)(A+B) + s(A\sharp_t B + A\sharp_{1-t} B)||_p \le ||(1-s)(A+ B) + s(A^{t}B^{1-t} + A^{1-t}B^t)||_p.
$$
When $t=1/2$ we obtain one kind of inequality for the matrix Heron mean
$$
||\frac{1-s}{2}(A+B) + s(A\sharp B) ||_p \le ||\frac{1-s}{2}(A+ B) + sA^{1/2}B^{1/2}||_p.
$$
\end{remark}
\begin{remark} By the same arguments, one can show that 
$$
||A+B + A\sharp_t B + A\sharp_{1-t} B ||_p \le ||A+ B + A^{t}B^{1-t} + A^{1-t}B^t||_p.
$$
But is we use another version of the Heinz mean $(A^{t}B^{1-t} + B^tA^{1-t})/2$ and realize the same proof in Theorem \ref{theorem} the inequality in Corollary \ref{cor} could be as follows
\begin{equation}\label{th122}
\Tr((A \sharp_t B)(B\sharp_t A))  \le \hbox{Re} \Tr( A^tB^t A^{1-t}B^{1-t}).
\end{equation}
Notice that both sides are bounded by $\Tr(AB)$ but it is not clear that (\ref{th122}) is true or not.
\end{remark}

From the proof of the main theorem, it is natural to ask the following question:  Is it true that for $0 \le X, Y \le Z$ such that $X \prec_{\log} Y$
\begin{equation}
Z^{1/2} X Z^{1/2} \prec_{\log}  Z^{1/2} Y Z^{1/2}?
\end{equation}

Unfortunately, the answer is negative. Indeed, let $$X = \begin{pmatrix}1&0\\0&1 \end{pmatrix}, \quad
Y = \begin{pmatrix}
0 &1\\1&0
\end{pmatrix},\quad 
Z= \begin{pmatrix}
1 &0\\0&4
\end{pmatrix}.
$$
Now $s(X) =s(Y)  =(1,1)$ but  $s(Z^{1/2}XZ^{1/2}) = (1, 2)  \not\prec_w (\sqrt 2,\sqrt 2) =s(Z^{1/2}YZ^{1/2})$. So it is not even true for diagonal positive definite matrices.

\section{Determinant Inequality for the Heron mean}
Let's recall a recent result of Audeanert \cite{AD}: for any positive semidefinite matrices $A$ and $B$ 
\begin{equation}
\hbox{det}(I + A\sharp B) \le \hbox{det}(I + A^{1/2} B^{1/2}). 
\end{equation}
The author used the well-known fact that $\lambda (A\sharp B) \prec_{\log} \lambda (A^{1/2}B^{1/2})$ and the function $\Phi(X)= \sum_{i=1}^n \log(1+ e^{x_i})$ ($X = (x_1, x_2, \cdots, x_n)$)  is isotone (i.e. the function preserving weak majorization: $x \prec y \Rightarrow  \Phi(x) \prec_w \Phi(y)$.)

In fact, for matrices $A$ and $B$ such that $\lambda(A) \prec_{\log} \lambda(B)$ we have 
\begin{equation}\label{au1}
\Dt(I + A) \le \Dt(I+B).
\end{equation}
A useful characterization of isotone functions in the case $m=1$ is as follows:

\begin{lemma}
A differentiable function $\Phi: \mathbb{R}^n \to \mathbb{R}$ is isotope if and only if it satisfy
\begin{itemize}
\item[(1)] $\Phi$ is permutation invariant;
\item[(2)] for all $X \in \mathbb{R}^n$ and for all $i, j$:
$$
(x_i-x_j) \big(\frac{\partial \Phi}{\partial x_i} (x)- \frac{\partial \Phi}{\partial x_j} (x)\big) \ge 0.
$$
\end{itemize}
\end{lemma}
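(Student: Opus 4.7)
The statement is the classical Schur--Ostrowski criterion for Schur-convexity, and I would treat the two implications separately.

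For the forward direction (isotonicity implies (1) and (2)), permutation invariance is immediate: for any permutation $\pi$ one has simultaneously $\pi(x) \prec x$ and $x \prec \pi(x)$, so isotonicity forces $\Phi(\pi(x)) = \Phi(x)$. For (2), I would use the standard Robin Hood perturbation. Fix $x \in \mathbb{R}^n$ and indices $i, j$ with $x_i > x_j$, and for small $\epsilon > 0$ set $x(\epsilon) = x + \epsilon(e_j - e_i)$. A direct check on the partial sums of the decreasing rearrangements shows $x(\epsilon) \prec x$ for all sufficiently small $\epsilon > 0$. Isotonicity then gives $\Phi(x(\epsilon)) \le \Phi(x)$, and differentiating at $\epsilon = 0^+$ yields $\partial_j \Phi(x) - \partial_i \Phi(x) \le 0$. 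The case $x_i < x_j$ is symmetric and $x_i = x_j$ makes the inequality vacuous, so (2) holds pointwise.

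For the reverse direction, I would invoke the Muirhead--Hardy--Littlewood--P\'olya representation of majorization: if $x \prec y$, there exists a finite chain $y = y^{(0)}, y^{(1)}, \ldots, y^{(N)} = x$ such that each link $y^{(k)} \to y^{(k+1)}$ is realized by a single T-transform, i.e.\ a segment of the form $y(t) = y^{(k)} + t(e_j - e_i)$ with $y^{(k)}_i \ge y^{(k)}_j$ and $t \in [0, (y^{(k)}_i - y^{(k)}_j)/2]$. Along such a segment the ordering $y(t)_i \ge y(t)_j$ is preserved, and the chain rule gives
\begin{equation*}
\frac{d}{dt}\Phi(y(t)) = \partial_j \Phi(y(t)) - \partial_i \Phi(y(t)) \le 0
\end{equation*}
by hypothesis (2). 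Integrating along each segment and telescoping over $k$ delivers $\Phi(x) \le \Phi(y)$, which is isotonicity; permutation invariance (1) is used implicitly so that one may always reindex the two moving coordinates to satisfy $y(t)_i \ge y(t)_j$.

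The main obstacle is the existence of the T-transform chain realizing $x \prec y$ -- Muirhead's lemma -- whose standard proof is by induction on the number of indices where the sorted versions of $x$ and $y$ disagree, and requires some care to keep the partial-sum inequalities intact at each reduction step. Once that combinatorial fact is available, the rest is a one-variable chain-rule argument applied to hypothesis (2). An alternative route would write $x$ as a convex combination of permutations of $y$ via Birkhoff--von Neumann and then combine symmetry with (2), but the T-transform route is cleaner for differentiable $\Phi$.
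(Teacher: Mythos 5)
Your proposal is correct, but note that the paper itself offers no proof of this lemma at all: it is quoted as a known characterization (in effect the Schur--Ostrowski criterion, imported from Audenaert's paper and ultimately from Marshall--Olkin), so there is nothing in the source to compare your argument against. What you supply is the standard two-sided argument: the Robin Hood perturbation $x+\epsilon(e_j-e_i)$ for necessity, and the Hardy--Littlewood--P\'olya decomposition of majorization into a finite chain of T-transforms for sufficiency, with the one-variable chain rule turning hypothesis (2) into monotonicity along each segment. Both halves are sound. Two small remarks. First, a T-transform in the HLP chain generally stops short of the midpoint (it is $\lambda I+(1-\lambda)Q$ with $\lambda\in[1/2,1]$), so your parameter $t$ ranges over a subinterval of $[0,(y_i^{(k)}-y_j^{(k)})/2]$ rather than the whole interval; this does not affect the argument since the sign of the derivative is controlled throughout. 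Second, be aware that conditions (1) and (2) characterize preservation of ordinary majorization $\prec$ (Schur-convexity), which is what the paper's written definition of ``isotone'' ($x\prec y\Rightarrow\Phi(x)\le\Phi(y)$ for $m=1$) amounts to; if one instead wanted preservation of weak majorization $\prec_w$ in the hypothesis, an additional monotonicity condition $\partial\Phi/\partial x_i\ge 0$ would be required, as the example $\Phi(x)=-\sum_i x_i$ shows. Your reading is the one consistent with the lemma as stated, so the proof stands.
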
 
Do the similar argument as in \cite{AD} one can prove the following
\begin{equation}
\Dt (I + A\sharp_t B) \le \Dt (I + A^{1-t} B^{t}). 
\end{equation}

Now we can use this fact to obtain some inequality for the Heron mean.

\begin{theorem}
For any positive definite matrices $A$ and $B$
\begin{equation}\label{det}
\Dt (P_{t}(A, B)) \le \Dt (Q_{t}(A, B)).
\end{equation}
\end{theorem}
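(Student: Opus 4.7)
The plan is to unwind the explicit formulas $P_t(A,B)=A^{1/2}\bigl(\tfrac{I+C^t}{2}\bigr)^{1/t}A^{1/2}$ (with $C=A^{-1/2}BA^{-1/2}$) and $Q_t(A,B)=\bigl(\tfrac{A^t+B^t}{2}\bigr)^{1/t}$ so that the target determinant inequality reduces to one of the form $\det(I+X)\le\det(I+Y)$ between two positive matrices built from $A$ and $B$, and then to invoke the isotonicity of the symmetric gauge function $\Phi(x_1,\ldots,x_n)=\sum_i\log(1+e^{x_i})$ (via the lemma characterising isotone functions just above) together with the extended Audenaert inequality $\det(I+A\sharp_t B)\le\det(I+A^{1-t}B^t)$.

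Concretely, I would first raise both sides to the $t$-th power. Since
$$\det(P_t(A,B))^t=2^{-n}\det(A)^t\det(I+C^t),$$
and, factoring $A^{t/2}$ out on the right,
$$\det(Q_t(A,B))^t=2^{-n}\det(A^t+B^t)=2^{-n}\det(A)^t\det\bigl(I+A^{-t/2}B^tA^{-t/2}\bigr),$$
cancellation of the common factor $2^{-n}\det(A)^t$ turns the target into
$$\det\bigl(I+(A^{-1/2}BA^{-1/2})^t\bigr)\;\le\;\det\bigl(I+A^{-t/2}B^tA^{-t/2}\bigr). \quad(\ast)$$
Both matrices appearing in $(\ast)$ are positive and share the same determinant $\det(A)^{-t}\det(B)^t$, so a log-majorization between them would be sharp in the sense the isotonicity argument requires.

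Second, I would try to establish $(\ast)$ by proving the log-majorization $\lambda\bigl((A^{-1/2}BA^{-1/2})^t\bigr)\prec_{\log}\lambda\bigl(A^{-t/2}B^tA^{-t/2}\bigr)$ and then invoking $\Phi$ via the lemma to conclude. A natural route is to imitate the compound-matrix reduction from the proof of Proposition~2.1: first establish the largest-eigenvalue comparison by applying the Furuta inequality to the inverted pair $(A^{-1},B)$, and then lift through the antisymmetric tensor powers $C_k$ to obtain the full log-majorization.

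The main obstacle I anticipate lies precisely in this second step. The classical Araki--Lieb--Thirring inequality, which is the standard tool for comparing matrices of the form $X^{r/2}Y^rX^{r/2}$ with $(X^{1/2}YX^{1/2})^r$, delivers for $r\in(0,1]$ the \emph{reverse} log-majorization $\lambda(A^{-t/2}B^tA^{-t/2})\prec_{\log}\lambda\bigl((A^{-1/2}BA^{-1/2})^t\bigr)$. Hence $(\ast)$ cannot follow from Araki--Lieb--Thirring alone, and the hardest part of the argument will be to isolate the extra structural feature of the power mean $P_t(A,B)=A\sharp_{1/t}\bigl(\tfrac12(A+A\sharp_t B)\bigr)$ which tips the direction of the log-majorization in favour of $(\ast)$; this is where I would expect the bulk of the technical effort to land.
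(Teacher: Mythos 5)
Your reduction is exactly the one the paper uses: raising both sides to the $t$-th power and factoring out $\det(A)^t$ turns the claim into your $(\ast)$, which is the paper's inequality (\ref{11111}). The genuine gap in your proposal is that you stop there: $(\ast)$ is never established, and the ``extra structural feature'' you hope will flip the direction of the log-majorization does not exist. Your diagnosis of the obstacle is, however, correct, and the paper's own proof does not get past it either: the paper asserts that the Araki--Lieb--Thirring inequality yields $\lambda(I+(A^{-1/2}BA^{-1/2})^t)\prec_{\log}\lambda(I+A^{-t/2}B^{t}A^{-t/2})$, whereas for $t\in(0,1]$ ALT gives precisely the reverse relation $\lambda(A^{-t/2}B^{t}A^{-t/2})\prec_{\log}\lambda\bigl((A^{-1/2}BA^{-1/2})^t\bigr)$, which through the isotonicity lemma produces $\det(I+A^{-t/2}B^{t}A^{-t/2})\le\det\bigl(I+(A^{-1/2}BA^{-1/2})^t\bigr)$ --- the opposite of (\ref{11111}).

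Moreover $(\ast)$ is genuinely false, so no Furuta or compound-matrix refinement can rescue it. Put $C=(A^{-1/2}BA^{-1/2})^t$ and $D=A^{-t/2}B^{t}A^{-t/2}$; they have equal determinants, and the trace form of ALT gives $\Tr(C)\ge\Tr(D)$, strictly unless $A$ and $B$ commute. For $2\times 2$ matrices $\det(I+X)=1+\Tr(X)+\det(X)$, so $\det(I+C)\ge\det(I+D)$ and hence $\det(P_t(A,B))\ge\det(Q_t(A,B))$, with strict inequality in general. Concretely, for $t=1/2$, $A=\diag(4,1)$ and $B=\left(\begin{smallmatrix}5/2&3/2\\3/2&5/2\end{smallmatrix}\right)$ one computes $\det(A+B+2A\sharp B)\approx 72.72$ while $\det(A+B+A^{1/2}B^{1/2}+B^{1/2}A^{1/2})=72.25$, contradicting the stated Corollary. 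So the correct conclusion of your (and the paper's) reduction is the reversed determinant inequality; the statement as given cannot be proved because it is not true.
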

\begin{proof}
The inequality (\ref{det}) is equivalent to the following 
\begin{align*}
 \Dt^{1/t} (A^{t}+B^{t}) & = \Dt(A) \Dt^{1/t} (I+A^{-t/2}B^{t}A^{-t/2})\\
  & \ge  \Dt(A\sharp_{1/t}(A+A\sharp_t B)) \nonumber \\ 
 & =\Dt(A) \cdot \Dt^{1/t} (I + (A^{-1/2}BA^{-1/2})^t)
\end{align*} 
or
\begin{align}\label{11111}
\Dt (I+A^{-t/2}B^{t}A^{-t/2}) \ge \Dt (I + (A^{-1/2}BA^{-1/2})^t).
\end{align} 
By the Araki-Lieb-Thirring inequality we have
$$
\lambda(I + (A^{-1/2}BA^{-1/2})^t) \prec_{\log} \lambda(I+A^{-t/2}B^{t}A^{-t/2}).
$$
Therefore, the inequality (\ref{11111}) follows from the last inequality and (\ref{au1}). 
\end{proof}
As a consequence, we obtain a determinant inequality for the Heron mean.
\begin{corollary}
For any positive definite matrices $A$ and $B,$
$$
\Dt(A+ B + 2(A\sharp B)) \le \Dt(A+ B + A^{1/2}B^{1/2} + A^{1/2}B^{1/2})).
$$
\end{corollary}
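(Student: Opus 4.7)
The plan is to specialize the preceding theorem to $t=1/2$; the corollary will then follow by direct expansion of the explicit two-variable formulas for $P_t(A,B)$ and $Q_t(A,B)$ recalled in the introduction, together with the scaling property $\Dt(cX)=c^n\Dt(X)$.

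First I would simplify the right-hand side. Since $Q_t(A,B)=\bigl(\tfrac{A^t+B^t}{2}\bigr)^{1/t}$, setting $t=1/2$ gives $Q_{1/2}(A,B)=\bigl(\tfrac{A^{1/2}+B^{1/2}}{2}\bigr)^2$. Expanding the square produces
\[
Q_{1/2}(A,B)=\tfrac14\bigl(A+B+A^{1/2}B^{1/2}+B^{1/2}A^{1/2}\bigr),
\]
which is precisely $\tfrac14$ of the matrix on the right-hand side of the claimed inequality.

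Next I would simplify the left-hand side using the closed form $P_t(A,B)=A^{1/2}\bigl(\tfrac{I+(A^{-1/2}BA^{-1/2})^t}{2}\bigr)^{1/t}A^{1/2}$. Set $C:=(A^{-1/2}BA^{-1/2})^{1/2}$, so that $A^{1/2}CA^{1/2}=A\sharp B$ and $A^{1/2}C^2A^{1/2}=B$. Then
\[
P_{1/2}(A,B)=A^{1/2}\Bigl(\tfrac{I+C}{2}\Bigr)^{2}A^{1/2}=\tfrac14 A^{1/2}(I+2C+C^2)A^{1/2}=\tfrac14\bigl(A+2(A\sharp B)+B\bigr),
\]
which is exactly $\tfrac14$ of the left-hand matrix in the corollary.

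Finally, the theorem yields $\Dt(P_{1/2}(A,B))\le \Dt(Q_{1/2}(A,B))$. Multiplying both sides by $4^n$ via $\Dt(cX)=c^n\Dt(X)$ cancels the common scalar and produces the asserted inequality. The only real ``obstacle'' is carrying out the expansion of $P_{1/2}(A,B)$ correctly; once both means are put in closed form, the corollary reduces to a substitution.
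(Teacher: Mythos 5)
Your proof is correct and is exactly the route the paper intends: the corollary is stated as an immediate consequence of the theorem $\Dt(P_t(A,B))\le \Dt(Q_t(A,B))$ at $t=1/2$, with the specialization left to the reader, and your expansion of $P_{1/2}(A,B)=\tfrac14\bigl(A+B+2(A\sharp B)\bigr)$ and $Q_{1/2}(A,B)=\tfrac14\bigl(A+B+A^{1/2}B^{1/2}+B^{1/2}A^{1/2}\bigr)$ together with $\Dt(cX)=c^n\Dt(X)$ is precisely that specialization. As a bonus, your computation silently corrects two typos in the paper (the missing factor $2$ on $A\sharp B$ in the introduction's formula for $P_{1/2}$, and the repeated $A^{1/2}B^{1/2}$ in place of $B^{1/2}A^{1/2}$ on the right-hand side of the corollary).
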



\end{document}